\newtheorem{theorem}{Theorem}[section]
\newtheorem{proposition}[theorem]{Proposition}
\theoremstyle{definition}
\newtheorem{definition}[theorem]{Definition}
\newtheorem{remark}[theorem]{Remark}
\newcommand{\R}{\mathbb{R}}
\newcommand{\N}{\mathbb{N}}
\newcommand{\K}{\mathbb{K}}
\newcommand{\Ss}{\mathbb{S}}
\newcommand{\PP}{\mathcal{P}}
\title[Dynamics of 3D nilpotent vector fields]{Discrete and continuous dynamics of real $3$-dimensional nilpotent polynomial vector fields}
\author[]{\'Alvaro Casta\~neda}
\author[]{Salom\'on Rebollo--Perdomo}
\subjclass{Primary: 37C05, 37C10; Secondary: 37C25, 37C27, 37C75.}
\keywords{Nilpotent vector field, Discrete dynamical system, Global Attractor, Integrability, Periodic orbit.}
\thanks{The first author is funded by FONDECYT Regular Grant 1200653. The second author is supported by Universidad de B\'{\i}o-B\'{\i}o Grant 2020134 IF/R}
\address{Universidad de Chile, Departamento de Matem\'aticas. Casilla 653, Santiago, Chile}
\email{castaneda@uchile.cl }
\address{Departamento de Matem\'atica, Facultad de Ciencias,Universidad del B\'{\i}o-B\'{\i}o, Casilla 5--C, Concepci\'on,
Chile}
\email{srebollo@ubiobio.cl }
\begin{document}

\maketitle

\begin{abstract}
    A large class of real $3$-dimensional nilpotent polynomial vector fields of arbitrary degree is considered. The aim of this work is to present general properties of the discrete and continuous dynamical systems induced by these vector fields. In the discrete case, it is proved that each dynamical system has a unique fixed point and no $2$-cycles.  Moreover, either the fixed point is a global attractor or  there exists  a $3$-cycle which is not a repeller. In the continuous setting, it is proved that each dynamical system is polynomially integrable. In addition, for a subclass of the considered vector fields, the system is polynomially completely integrable. Furthermore, for a family of low degree vector fields, it is provided a more precise description about the global dynamics of the trajectories of the induced dynamical system. In particular, it is proved the existence of an invariant surface foliated by periodic orbits.
Finally, some remarks and open questions, motivated by our results, the Markus--Yamabe Conjecture and the problem of planar limit cycles, are given.
\end{abstract}

\section{Introduction}
The study of 
\emph{nilpotent polynomial vector fields}, 
i.e. polynomial vector fields 
$\displaystyle F \colon \K^n \longrightarrow \K^n$ 
whose Jacobian matrix $JF$ is nilpotent, 
where $\K$ is a field  of characteristic zero, 
is closely related to the Jacobian Conjecture~\cite{Ess2000,EKC2021}.  
Indeed, the seminal works of 
A.V. Yagzhev~\cite{Yag} and H. Bass et al. \cite{BCW} 
prove that for analyzing the Jacobian Conjecture 
is sufficient to focus on polynomial vector 
fields of the form $I+F$, where $I$ is the identity 
and $F$ is nilpotent and homogeneous of degree three.
Another 
motivation for studying nilpotent polynomial vector fields
arises from dynamics. Recall  that
each real vector field 
$F \displaystyle \colon \R^n \longrightarrow \R^n$ 
induces 
a discrete dynamical system 
defined by the iteration of $F$
and a continuous dynamical system 
defined by the flow generated by 
the differential system associated with $F$.
For this induced continuous (resp. discrete) dynamical system  L. Markus and H. Yamabe \cite{MY} 
(resp. J. LaSalle \cite{LaSalle}) 
 established the continuous (resp. discrete) 
global stability conjecture, which is true for $n \leq  2$ with $F$ polynomial, but it admits counterexamples for $n\geq 3$. Such counterexamples have the form $\lambda I + F$ where $\lambda < 0 \, (\textrm{resp. } |\lambda| < 1)$  and $F$ a nilpotent polynomial vector field, see \cite{CEGMH,CGM}.

Therefore,  
the nilpotent polynomial vector fields 
play a fundamental role in order to give a 
negative or positive answer to the Jacobian Conjecture as well as in the construction of examples and counterexamples to the Markus--Yamabe and LaSalle conjectures.
Furthermore, the characterization and understanding of this kind of vector fields 
in any dimension and any degree, even inhomogeneous, 
go beyond these conjectures and represent 
a challenging open problem by itself.

The characterization of nilpotent polynomial 
vector fields is well-known in dimension two \cite[p. 148]{Ess2000}. In dimension three, it depends on the linear dependence of the components of $F$ over $\K$. When the components are linearly dependent, it is given in \cite[Corollary 1.1]{ChE} (the former counterexamples to both Markus--Yamabe and LaSalle conjectures have linearly dependent components).   When the components are linearly independent, the first steps towards such a characterization were taken by  M. Chamberland and A. van den Essen in \cite{ChE}. In particular, they prove in \cite[Theorem 2.1]{ChE} that any polynomial vector field
$F(x,y,z)=(u(x,y), v(x,y,z), h(u(x,y)))$ is nilpotent if and only if 
\begin{displaymath} 
 F(x,y,z) = \left(g(y+b(x)),v_1 z
- (b_1 + 2 v_1 \alpha x) g(y+b(x)),\alpha (g(y+b(x)))^2\right) ,
\end{displaymath}
where $ b(x) = b_1 x + v_1 \alpha x^2 $, $v_1 \alpha \neq 0 $ and $ g \in \K[t]$, with $\deg g \geq 1$ and $ g(0) = 0$. 
Later, 
Chamberland consider the above form of $F$
with the particular parameters: 
$b_1 = 0, v_1 = 1$ and $ \alpha = -1$. 
He showed in \cite{Cha} that 
the discrete dynamical system 
induced by such a particular vector field 
has a unique fixed point, 
there are not 2-cycles, 
and under a suitable condition on the function $g$ there exists 
a $3$-cycle, which show that the nilpotent polynomial vector fields can induce a rich dynamics.

Another step in the task of 
the characterization of 
nilpotent polynomial vector fields 
in dimension three 
has been done by 
D. Yan and G. Tang in \cite{Dan1}, 
where they generalize the results of \cite{ChE}. 
This characterization problem 
has been followed by several authors; 
see for instance \cite{Dan2} and references there in. 
One of the most recent and general results in this issue is \cite[Theorem 1]{CaVa}, which gives
the 
characterization of all nilpotent polynomial 
vector fields 
$\displaystyle F \colon \K^n \longrightarrow \K^n$  
of the form
\begin{equation*}
F(x_1,x_2,\ldots,x_n) = 
(F_1(x_1,x_2),F_2(x_1,x_2,x_3), \ldots, F_{n-1}(x_1,x_2,x_{n}),F_n(x_1,x_2)).
\end{equation*}
In the three dimensional real case and by changing 
the variables $x_1,x_2,x_3$ by $x,y,z$, such a characterization is as follows. The polynomial vector field
\begin{equation}
\label{campo-vectorial-3d-corto}
F \colon \R^3 \longrightarrow \R^3,\; (x,y,z)\longmapsto (F_1(x,y),F_2(x,y,z),F_3(x,y)),
\end{equation}
is nilpotent  if and only if
\begin{equation}
 \label{campo-vectorial-3d}
    \begin{aligned}
    F_1(x,y) &=  P_1\big(y + A_1(x)\big),\\[0.5pt]
    F_2(x,y,z) &=  P_2\Big(z + \frac{1}{d_2 p_{d_2}}                                  A_2(x)\Big)-A_1'(x)F_1(x,y),\\[0.5pt]
    F_3(x,y) &= -\frac{1}{d_{2} {p_d}_{2}} \left[-\frac{1}{2}                          {A_1''(x)} \big(F_1(x,y)\big)^{2}+A_{2}'(x)                        F_1(x,y)\right]+A_3,
    \end{aligned}
\end{equation}
where 
\begin{equation}
\label{cond-Pols-P-3d}
\left \{ \begin{aligned}
    & P_i \in \mathbb{R}[s],\; d_i:=\deg P_i \geq 1,\; 
p_{d_i}:= \mbox{the leading coefficient of $P_i$},\\
& A_1(x)=a_{10}+a_{11}x+a_{12}x^2, \,\,
A_2(x)=a_{20}+a_{21}x, \: A_3\in \R.\\
&\textrm{If} \, \,  d_2 > 1, \, \,  then \, \, A_1''(x)\equiv 0.
\end{aligned}
\right .
\end{equation}

In this work, inspired by Chamberland's article \cite{Cha}, we will analyze the discrete and continuous dynamics induced by the nilpotent polynomial vector fields \eqref{campo-vectorial-3d-corto}, whose components are in \eqref{campo-vectorial-3d}. More precisely,  on the one hand, we will study the discrete dynamical system $(\R^3,\N_0,F)$, where the dynamics is given by
\begin{equation}
\label{sistema-discreto-3d}
	\begin{aligned}
	{x}_{k+1} &= F_1\left(x_{k},y_{k}\right),  \\[0.5pt]
    {y}_{k+1} &= F_2\left(x_{k},y_{k},z_{k}\right),  \\[0.5pt]
    {z}_{k+1} &= F_3\left(x_{k},y_{k}\right).
    \end{aligned}
\end{equation}
On the other hand, the continuous dynamical system $(\R^3,\R,\Phi)$, where $\Phi$ is the flow generated by the differential system
\begin{equation}
    \label{sistema-diferencial-3d}
    \begin{aligned}
    \dot{x} &=  F_1(x,y),\\[0.5pt]
    \dot{y} &=  F_2(x,y,z),\\[0.5pt]
    \dot{z} &=  F_3(x,y).
\end{aligned}
\end{equation}

Concerning the discrete dynamics our main result is the following.
\begin{theorem}
\label{Teorema-1}
Each system \eqref{sistema-discreto-3d} 
has a unique fixed point and 
there are no $2$-cycles. 
In addition, 
\begin{enumerate}
\item if $\deg A_1(x)=1$, then the fixed point is a global attractor, which is reached from any initial point after three iterations;
\item if $\deg A_1(x)=2$, then the system has a $3$-cycle which is not a repeller.
\end{enumerate}
\end{theorem}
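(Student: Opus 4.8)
The plan is to conjugate the map by the polynomial diffeomorphism of $\R^3$ given by $\eta = y + A_1(x)$ and $\zeta = z + \frac{1}{d_2 p_{d_2}}A_2(x)$, which preserves fixed points, cycles, attractors and repellers. A direct computation shows that in the coordinates $(x,\eta,\zeta)$ the iteration \eqref{sistema-discreto-3d} becomes
\begin{align*}
x' &= P_1(\eta),\\
\eta' &= P_2(\zeta) - 2a_{12}\,x\,P_1(\eta) + a_{12}\big(P_1(\eta)\big)^2 + a_{10},\\
\zeta' &= \frac{a_{12}}{d_2 p_{d_2}}\big(P_1(\eta)\big)^2 + c, \qquad c := A_3 + \frac{a_{20}}{d_2 p_{d_2}}.
\end{align*}
The decisive observation is that $\zeta'$ depends only on $x'=P_1(\eta)$; hence every point in the image of the map, and in particular every point lying on any cycle, belongs to the invariant surface $S = \{\zeta = \frac{a_{12}}{d_2 p_{d_2}}x^2 + c\}$. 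Restricting to $S$, and using that $d_2=1$ whenever $a_{12}\neq 0$ by \eqref{cond-Pols-P-3d} so that the quadratic terms in the $\eta'$-equation assemble into a perfect square, the induced planar map takes the clean form $T(x,\eta) = \big(P_1(\eta),\, a_{12}(x - P_1(\eta))^2 + e\big)$ with $e := P_2(c)+a_{10}$.

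For the fixed point I would solve $x = P_1(\eta)$ and $\eta = e$ on $S$, which forces $\eta^\ast = e$, $x^\ast = P_1(e)$, a single point; undoing the coordinate change recovers uniqueness in $\R^3$. For the absence of $2$-cycles I would argue on $S$: if $(x_0,\eta_0)\mapsto(x_1,\eta_1)\mapsto(x_0,\eta_0)$, then $\eta_0 = a_{12}(x_1-x_0)^2+e$ and $\eta_1 = a_{12}(x_0-x_1)^2+e$ coincide because the squares agree, whence $x_0 = P_1(\eta_1) = P_1(\eta_0) = x_1$ and the orbit collapses to the fixed point. When $\deg A_1 = 1$ one has $a_{12}=0$, so on $S$ the map degenerates to $x'=P_1(\eta)$, $\eta'=e$; thus $\zeta$ is constant after one step, $\eta$ equals $e$ after two, and $x$ equals $P_1(e)$ after three, proving that the fixed point is reached from every initial condition in exactly three iterations.

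The heart of the matter is the existence of a $3$-cycle when $\deg A_1 = 2$. Expressing a period-$3$ orbit of $T$ through its $x$-coordinates, the cyclic relations reduce to $x_{i} = Q(x_{i+1}-x_{i+2})$ with the even function $Q(s) := P_1(a_{12}s^2 + e)$. I would seek the partially symmetric solution $x_1 = x_2$, which forces $x_0 = Q(0) = P_1(e) = x^\ast$ and collapses the system to the scalar equation $t = Q(t-x^\ast)$; with $\sigma = t - x^\ast$ this reads $\Psi(\sigma) := P_1(a_{12}\sigma^2 + e) - P_1(e) - \sigma = 0$. Here $\Psi(0)=0$ and $\Psi'(0) = -1$, so $\Psi$ changes sign across the origin, while its top-degree term $p_{d_1}a_{12}^{d_1}\sigma^{2d_1}$ (nonzero, even degree) dominates at infinity; comparing the sign of $\Psi$ just off $0$ with its sign at $\pm\infty$ yields, by the intermediate value theorem, a root $\sigma^\ast \neq 0$ on the appropriate half-line whatever the sign of $p_{d_1}a_{12}^{d_1}$. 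The three resulting points are pairwise distinct precisely because $\sigma^\ast\neq 0$ and $a_{12}\neq 0$, so the period is exactly $3$. I expect the treatment of both signs of $p_{d_1}a_{12}^{d_1}$, together with verifying that the cancellations producing $T$ genuinely occur, to be the most delicate steps.

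Finally, that this $3$-cycle is not a repeller is immediate from nilpotency: by the chain rule $D(F^3)$ at a cycle point is the product of three Jacobians $JF$, each nilpotent and hence singular, so $D(F^3)$ has $0$ as an eigenvalue and cannot have all its eigenvalues of modulus greater than one.
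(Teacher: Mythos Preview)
Your argument is correct; the overall strategy matches the paper's, but the execution differs in two interesting ways.

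After the first conjugation (which is exactly the paper's map $\Psi$), the paper does not restrict to the invariant surface $S$; instead, in the case $d_2=1$ it applies a second polynomial automorphism $\Psi_2$ to reach the three-dimensional delay-type system
\[
X_{k+1}=P_1(Y_k)-P_1(Z_k),\qquad Y_{k+1}=Z_k,\qquad Z_{k+1}=a_{12}\big(P_1(Y_k)-P_1(Z_k)\big)^2+\nu,
\]
and looks for a $3$-cycle of the special shape $(0,s_0,\nu)\to(\ast,\nu,s_0)\to(\ast,s_0,s_0)\to(0,s_0,\nu)$, which reduces to the even-degree equation $h(s)=a_{12}(P_1(s)-P_1(\nu))^2+\nu-s=0$ having a simple root at $s=\nu$. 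Your reduction to the planar map $T$ on $S$ and the ansatz $x_1=x_2$ lead to a different even-degree polynomial $\Psi(\sigma)=P_1(a_{12}\sigma^2+e)-P_1(e)-\sigma$, but the mechanism (simple root at the trivial solution, even leading term forcing a second real root) is the same. Your route is arguably cleaner, since the reduction to two variables is geometrically transparent and avoids introducing $\Psi_2$.

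The sharper difference is your non-repeller argument. The paper computes the product of the three Jacobians along the cycle explicitly and reads off a zero column. Your observation that $\det JF\equiv 0$ (by nilpotency) forces $\det D(F^3)=0$ at every point, hence $0$ is an eigenvalue of $D(F^3)$ at the cycle, gives the conclusion in one line and applies to any cycle of any nilpotent map. This is a genuine simplification over the paper's computation.
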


Although the assertions of this result are essentially the same as in the work of Chamberland \cite[Theorem 3.1]{Cha},  we emphasize that the above theorem is a generalization. Indeed,
 the family of nilpotent vector fields of the form \eqref{campo-vectorial-3d-corto} is wider than the studied in \cite{Cha}. For instance,  the polynomial $P_2(s)$ in \eqref{campo-vectorial-3d} is of arbitrary degree while in Chamberland's paper is linear.

\medskip
Regarding the continuous dynamics our main result is as follows.
\begin{theorem}
\label{Teorema-2}
Each differential system \eqref{sistema-diferencial-3d} is polynomially integrable. In addition, if $\deg A_1(x)=1$, then  differential system \eqref{sistema-diferencial-3d} is polynomially completely integrable.
\end{theorem}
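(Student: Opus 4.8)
The plan is to find an explicit polynomial change of coordinates that triangularizes the system, reducing the search for first integrals to an essentially planar problem. Guided by the structure of \eqref{campo-vectorial-3d}, I would introduce the new variables
\begin{equation*}
u = y + A_1(x), \qquad w = z + \frac{1}{d_2 p_{d_2}}\, A_2(x),
\end{equation*}
keeping $x$ as the third coordinate. This is a polynomial diffeomorphism of $\R^3$ with polynomial inverse $y = u - A_1(x)$, $z = w - \frac{1}{d_2 p_{d_2}}A_2(x)$, so polynomial first integrals in $(x,u,w)$ correspond exactly to polynomial first integrals in $(x,y,z)$. The expectation, which I would verify by differentiating using $\dot u = \dot y + A_1'(x)\dot x$ and $\dot w = \dot z + \frac{1}{d_2 p_{d_2}}A_2'(x)\dot x$ together with \eqref{campo-vectorial-3d}, is that all cross terms cancel and the system takes the triangular form
\begin{equation*}
\dot x = P_1(u), \qquad \dot u = P_2(w), \qquad \dot w = \frac{a_{12}}{d_2 p_{d_2}}\,P_1(u)^2 + A_3,
\end{equation*}
where I used $A_1''(x) \equiv 2a_{12}$. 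The decisive feature is that the pair $(u,w)$ evolves independently of $x$.

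For the first assertion (polynomial integrability), I would observe that the $(u,w)$-subsystem is of separable type $\dot u = f(w)$, $\dot w = g(u)$ with $f = P_2$ and $g(u) = \frac{a_{12}}{d_2 p_{d_2}}P_1(u)^2 + A_3$, both polynomials. Choosing polynomial antiderivatives $\mathcal G' = g$ and $\mathcal F' = f$, the function $H_1(u,w) = \mathcal G(u) - \mathcal F(w)$ satisfies $\dot H_1 = g(u)f(w) - f(w)g(u) = 0$. Since $H_1$ does not involve $x$, it is a nonconstant polynomial first integral of the full system; reverting to $(x,y,z)$ yields the claimed integral.

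For the second assertion, assume $\deg A_1 = 1$, i.e. $a_{12} = 0$, so that $\dot w = A_3$ is constant and the system reduces to $\dot x = P_1(u)$, $\dot u = P_2(w)$, $\dot w = A_3$. When $A_3 = 0$, the variable $w$ is itself a first integral and a second, independent one is $H_2 = \mathcal P_1(u) - x\,P_2(w)$ with $\mathcal P_1' = P_1$. When $A_3 \neq 0$, I would take $H_1 = A_3 u - \mathcal F(w)$ and seek a second integral of the form $H_2 = x - \Theta(u,w)$, which forces the linear transport equation $P_2(w)\,\Theta_u + A_3\,\Theta_w = P_1(u)$. Solving along characteristics (on which $A_3 u - \mathcal F(w)$ is constant and $w$ serves as evolution parameter) produces $\Theta(u,w) = \frac{1}{A_3}\,Q\big(A_3 u - \mathcal F(w),\, w\big)$, where $Q(c,w) = \int_0^w P_1\big((c+\mathcal F(s))/A_3\big)\,ds$. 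In every case $H_2$ has nonzero $x$-derivative while $H_1$ has none, so the two integrals are functionally independent, giving complete integrability.

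The main obstacle, and the reason complete integrability is claimed only for $\deg A_1 = 1$, is establishing that $H_2$ is genuinely a \emph{polynomial}. This hinges on $Q(c,w)$ being polynomial in both arguments, which requires the integrand $P_1\big((c+\mathcal F(s))/A_3\big)$ to be polynomial in $s$ — true precisely because $\mathcal F$ is a polynomial and $\dot w$ is a nonzero constant. When $\deg A_1 = 2$ the term $\frac{a_{12}}{d_2 p_{d_2}}P_1(u)^2$ reappears in $\dot w$, the $(u,w)$-flow is no longer linear in time, and the remaining integration along the flow generically leaves the polynomial category (consistent with the richer, periodic-orbit behaviour recorded in Theorem~\ref{Teorema-1}(2)); I would therefore present the two assertions separately and flag the degree-two case as the genuinely harder one.
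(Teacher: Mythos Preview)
Your proof is correct and follows essentially the same route as the paper: the same polynomial change of variables reduces \eqref{sistema-diferencial-3d} to the triangular form $\dot x=P_1(u)$, $\dot u=P_2(w)$, $\dot w=\tfrac{a_{12}}{d_2 p_{d_2}}P_1(u)^2+A_3$, whose $(u,w)$-subsystem is planar Hamiltonian and supplies the first polynomial integral, and in the case $a_{12}=0$ the further cascade structure yields a second integral linear in the remaining variable. The only difference is in presentation of the $A_3\neq0$ case: the paper writes the second integral down explicitly as $A_3^{d_1+1}u-\sum_{j=0}^{d_1}(-1)^jA_3^{d_1-j}P_1^{(j)}(v)\,\xi_j(w)$ (with $\xi_0=w$, $\xi_j=\int P_2\,\xi_{j-1}\,dw$) and verifies it by a telescoping computation, whereas you derive the same object, up to scaling, by solving the transport equation along characteristics---both arguments are valid and polynomiality is transparent in each.
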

This result gives valuable information to describe and comprehend the long-term behavior of the trajectories of  each differential system \eqref{sistema-diferencial-3d}. In particular, it says that the dynamics of the system occurs in the algebraic surfaces defined by the level sets of the polynomial first integral guaranteed by the theorem. Thus, the topology of these surfaces plays an important role in kind of orbits that they can supported. For instance, if they are simply connected surfaces and does not posses any singularity of the system, then they can not support periodic orbits of the differential system.  

In order to get more precise features on the dynamics of the trajectories of these continuous dynamical systems, we will study some particular cases according with the degrees of $P_1(s)$ and $P_2(s)$ in \eqref{cond-Pols-P-3d}. Thus, we have the following result.
\begin{proposition}
\label{resultados-caso-d1=d2=1}
Assume that $\deg P_1(s)= \deg P_2(s)=1$ in system \eqref{sistema-diferencial-3d}.
\begin{enumerate}
\item If $\deg A_1(x)=1$, then  each nontrivial trajectory of system \eqref{sistema-diferencial-3d} goes to infinity in forward and backward time.
\item If $\deg A_1(x)=2$ and we define $\mu:=A_3\,a_{12}\,p_{d_2}\,p_{d_1}^2$, then 
\begin{enumerate}
    \item  each trajectory of \eqref{sistema-diferencial-3d} goes to infinity in forward and backward time if $\mu>0$,
   \item there exists a unique cuspidal invariant surface $\mathcal{S}_{0}$ of \eqref{sistema-diferencial-3d} and each trajectory of \eqref{sistema-diferencial-3d} in 
   $\R^3 \backslash \mathcal{S}_{0}$ 
   goes to infinity in forward and backward time if $\mu=0$,
   \item there exists a unique isochronous  periodic surface $\mathcal{S}_{\mu}$ of \eqref{sistema-diferencial-3d} and each trajec\-to\-ry of \eqref{sistema-diferencial-3d} in 
   $\R^3 \backslash \mathcal{S}_{\mu}$  goes to infinity in forward and backward time if $\mu<0$.
\end{enumerate}
\end{enumerate}
\end{proposition}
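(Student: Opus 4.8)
The plan is to strip away the triangular coupling with the polynomial change of variables $u=y+A_1(x)$, $w=z+\tfrac{1}{d_2p_{d_2}}A_2(x)$ (here $d_2=1$), whose inverse $y=u-A_1(x)$, $z=w-\tfrac{1}{d_2p_{d_2}}A_2(x)$ is again polynomial, so it is a proper diffeomorphism of $\R^3$; consequently a trajectory leaves every compact set in the original coordinates exactly when its image does in $(x,u,w)$, and equilibria correspond to equilibria. Differentiating and using \eqref{campo-vectorial-3d}, the terms $A_1'(x)F_1$ and $\tfrac{1}{d_2p_{d_2}}A_2'(x)F_1$ cancel, giving the skew-product form $\dot x=P_1(u)$, $\dot u=P_2(w)$, $\dot w=\tfrac{A_1''(x)}{2d_2p_{d_2}}P_1(u)^2+A_3$. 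Specializing to $\deg P_1=\deg P_2=1$, writing $P_i(s)=p_{d_i}s+q_i$ and $A_1''=2a_{12}$, this becomes
\[
\dot x = p_{d_1}u+q_1 =: U,\qquad \dot u = p_{d_2}w+q_2,\qquad \dot w = \tfrac{a_{12}}{p_{d_2}}U^{2}+A_3 .
\]
The structural point is that the $(u,w)$-block is autonomous (independent of $x$) and Hamiltonian, with cubic first integral
\[
H(u,w)=\frac{p_{d_2}}{2}\,w^{2}+q_2\,w-\frac{a_{12}}{3\,p_{d_1}p_{d_2}}\,U^{3}-A_3\,u ,
\]
while $x$ is recovered by the quadrature $\dot x=U$. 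Thus $\R^3$ is foliated by the invariant cylinders $\{H=c\}$, and the whole analysis reduces to the planar portrait of $(u,w)$ together with the $x$-drift.

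For part (1), $\deg A_1=1$ forces $a_{12}=0$, so $\dot w=A_3$ is constant and the block is affine; integration makes $w,u,x$ polynomials in $t$ of degrees $\le1,\le2,\le3$. Unless all three are constant (an equilibrium), one is nonconstant and hence unbounded as $t\to\pm\infty$, so by properness every nontrivial trajectory escapes in both time directions. For $\deg A_1=2$ the block equilibria satisfy $p_{d_2}w+q_2=0$ and $U^2=-A_3p_{d_2}/a_{12}=-\mu/(p_{d_1}a_{12})^2$, so their number is governed by the sign of $\mu$. In case (2a), $\mu>0$, there are no equilibria and $\dot w=\tfrac{a_{12}}{p_{d_2}}U^2+A_3$ keeps the sign of $A_3$ with $|\dot w|\ge|A_3|>0$; hence $w$ is strictly monotone and unbounded and every trajectory escapes. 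In case (2b), $\mu=0$ forces $A_3=0$, leaving a single (cusp) equilibrium line $\{U=0,\ p_{d_2}w+q_2=0\}$ contained in the unique level $\{H=c_0\}$, whose planar section is the cuspidal cubic $W^2=\tfrac{2a_{12}}{3p_{d_1}p_{d_2}^{2}}U^{3}$ (with $W=w+q_2/p_{d_2}$); I take $\mathcal S_0$ to be this cylinder. Every other level is an unbounded curve along which $|U|\to\infty$, so all trajectories off $\mathcal S_0$ escape in both directions.

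The substantive case is (2c), $\mu<0$: the block has a saddle at $U_S$ and a center at $U_C$ with $U_C=-U_S$ and $U_C^2=U_S^2=-A_3p_{d_2}/a_{12}=:U_*^2>0$. The center is surrounded by a period annulus of closed block-orbits $\gamma_c$ bounded by the saddle's homoclinic loop; outside the loop the level curves are unbounded, and over such cylinders $|U|\to\infty$, so those trajectories escape. Over a closed orbit $\gamma_c$ of block-period $T(c)$, the $x$-motion obeys $x(t+T)-x(t)=\oint_{\gamma_c}U\,dt=T(c)\,\overline U(c)$, whence the \emph{3D} orbit is periodic precisely when $\overline U(c)=0$; and when it is, the cylinder $\{H=c\}$ consists of the $x$-translates of one closed curve, all of the common period $T(c)$ — an isochronous sheet. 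Integrating $\dot w$ over a period gives the invariant $\overline{U^2}(c)=U_*^2$ for every closed orbit, so $\overline U(c)^2\le U_*^2$. The surface $\mathcal S_\mu:=\{H=c^{*}\}$, with $c^{*}$ the level where $\overline U(c^{*})=0$, is then the desired periodic surface, and every orbit off it escapes (unbounded level, or $\overline U\ne0$ forcing $x\to\pm\infty$).

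It remains to produce $c^{*}$ and show it is unique. Existence follows from a sign change: as $c$ runs from the center value to the homoclinic value, $\overline U(c)\to U_C$ (the orbit collapses to the center) while $\overline U(c)\to U_S=-U_C$ (the period diverges and the time-average concentrates at the saddle), so $\overline U$ vanishes somewhere by continuity. The hard part will be \emph{uniqueness}, i.e. strict monotonicity of $c\mapsto\overline U(c)=\oint_{\gamma_c}U\,dt\big/\oint_{\gamma_c}dt$ on the annulus. After an affine rescaling of $(U,W,t)$ normalizing the cubic to $\tfrac12\dot U^{2}=E-\big(\tfrac13U^{3}-U\big)$, this is the monotonicity of the ratio of the complete Abelian integrals $\oint U\,dU/W$ and $\oint dU/W$ over the (genus-one) level curves, which I would settle via their Picard–Fuchs relations or by invoking known monotonicity results for period and ratio functions of cubic Hamiltonians. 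This elliptic-integral monotonicity is the technical heart of the proposition; the rest is the elementary skew-product analysis above.
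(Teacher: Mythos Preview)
Your reduction and skew-product analysis coincide with the paper's: the same polynomial automorphism decouples the $(u,w)$-block into a planar Hamiltonian system, and parts (1), (2a), (2b) are handled by the same mechanisms (explicit polynomial solutions when $a_{12}=0$; monotone $w$ when $\mu>0$; the cuspidal level when $\mu=0$). For (2c) your drift average $\overline U(c)=\tfrac{1}{T(c)}\oint_{\gamma_c}U\,dt$ is exactly the paper's displacement function $L(0,c)=\oint_{\gamma_c}(y-1)\,d\tau$ after normalization, and your existence argument --- $\overline U(c)\to U_C$ as the orbit collapses to the center, $\overline U(c)\to U_S=-U_C$ at the homoclinic boundary because the period diverges and the time-average concentrates at the saddle --- is a clean alternative to the paper's direct sign inspection, though the saddle-concentration limit deserves a one-line justification.

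The genuine gap is uniqueness. You defer it to ``Picard--Fuchs relations or known monotonicity results for cubic Hamiltonians,'' but this is precisely the step that requires work, and invoking an unspecified monotonicity theorem is not a proof. The paper does not take the Abelian-integral route; instead it normalizes further to $x'=y-1$, $y'=z$, $z'=y(y-2)$ and adapts the van der Pol uniqueness argument from Hirsch--Smale--Devaney: for $c\in(2/3,4/3)$ the closed orbit $\gamma_c$ is sliced by the lines $\{y=1\}$ and $\{y=1-\sqrt3\}$ into four arcs $\gamma_c^1,\ldots,\gamma_c^4$, and one shows that each contribution $L_i(c)=\int_{\gamma_c^i}(y-1)$ is monotone with the right sign and growth (in particular $L_2(c)\to+\infty$ as $c\to 4/3$ while $L_1,L_3,L_4$ stay bounded), forcing a single zero of $L(0,c)$. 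Your Picard--Fuchs approach can certainly be carried out for this specific cubic, but as written the uniqueness claim is asserted rather than proved.
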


The properties of 
statement $2)$ of this proposition
states an interesting and surprising analogy with 
the Bogdanov--Takens bifurcation.
Indeed, in such a bifurcation, 
we can choose a $1$-parameter curve in such a way 
that the corresponding system 
has no singularities for positive values of the parameter, 
a cusp singularity if the parameter is zero,
and a unique periodic orbit (limit cycle) for negative values of the parameter. See \cite[p. 324]{Kus}.

The paper is organized as follows. In Section 2 we will simplify the expression of dynamical systems \eqref{sistema-discreto-3d} and \eqref{sistema-diferencial-3d} through polynomial automorphisms. We use the simplified expressions to analyze the discrete dynamics in Section 3 and the continuous dynamics in Section 4. Some concluding remarks, questions and comments are given in Section 5.

\section{Simpler conjugated systems}
\label{Seccion-2}
The main idea to prove our results is the use of polynomial automorphisms of $\R^3$ to transform the original dynamical systems into new ones with simpler expressions. The transformed dynamical systems are analyzed easily.
Concretely, the polynomial map 
\begin{equation}
\label{cambio-variables-3d-normal}
\begin{tikzcd}
(u,v,w) \arrow{r}[]{\Psi} 
& 
\displaystyle \bigg(u, 
v - A_1(u), 
w - \frac{1}{d_2 p_{d_2}} A_2(u)\bigg)=(x,y,z)
\end{tikzcd}
\end{equation}
is a polynomial automorphism of $\displaystyle \R^3$, whose inverse is
\begin{equation}
\label{cambio-variables-3d-inversa}
\begin{tikzcd}
(x,y,z) \arrow{r}[]{\Psi^{-1}} 
& 
\displaystyle \bigg(x, 
y + A_1(x), 
z + \frac{1}{d_2 p_{d_2}} A_2(x)\bigg)=(u,v,w).
\end{tikzcd}
\end{equation}
If we define 
$\displaystyle G(u,v,w) := 
(\Psi^{-1} \circ F \circ \Psi)(u,v,w)$, 
then 
$\displaystyle (\R^n,\N_0,F)$ and 
$\displaystyle (\R^n,\N_0,G)$ 
are conjugated. 
Explicitly, 
by using equations 
\eqref{campo-vectorial-3d}, 
\eqref{cond-Pols-P-3d}, 
\eqref{cambio-variables-3d-normal}, and 
\eqref{cambio-variables-3d-inversa},  
the discrete dynamical system
\eqref{sistema-discreto-3d} is conjugated to the system
\begin{equation}
\label{sistema-discreto-3d-conjugado}
	\begin{aligned}
	{u}_{k+1} &= P_1\left(v_{k}\right),  \\
    {v}_{k+1} &= P_2\left(w_{k}\right)+a_{12}P_1\left(v_{k}\right)\Big(P_1\left(v_{k}\right)-2u_{k}\Big)+a_{10},  \\
    {w}_{k+1} &= \frac{a_{12}}{d_2 p_{d_2}} \big(P_1(v_k)\big)^2 + A_3 + \frac{a_{20}}{d_2 p_{d_2}}.
    \end{aligned}
\end{equation}

Analogously, by using \eqref{cambio-variables-3d-inversa}, as a
change of coordinates, together with equations \eqref{campo-vectorial-3d} and \eqref{cond-Pols-P-3d}, the differential system \eqref{sistema-diferencial-3d} becomes
\begin{equation}
\label{sistema-diferencial-transformado-3d}
\begin{aligned}
    \dot{u} &= P_1(v),\\[0.5pt]
    \dot{v} &= P_2(w),\\[0.5pt]
    \dot{w} &= \frac{a_{12}}{d_{2} {p_d}_{2}}  \big(P_1(v)\big)^2+A_3.
\end{aligned}
\end{equation}
Thus, the  continuous dynamical systems associated with differential systems $\eqref{sistema-diferencial-3d}$ and \eqref{sistema-diferencial-transformado-3d} are conjugated.

\section{Discrete dynamics} 
In this section, we will prove the general properties of the discrete dynamical system \eqref{sistema-discreto-3d} stated in Theorem~\ref{Teorema-1}.

\begin{proof}[Proof of Theorem \ref{Teorema-1}]
From previous section, 
we know that the discrete dynamical systems 
\eqref{sistema-discreto-3d} 
and 
\eqref{sistema-discreto-3d-conjugado} 
are conjugated. 
Hence, we will use system \eqref{sistema-discreto-3d-conjugado} 
to give the proof of the theorem. The general part of the  result will be proved by considering two cases: $d_2>1$ and $d_2=1$. In addition, the proof of Statements $1)$ and $2)$ will be provided in the first and second cases, respectively.

{\it Case 1: $d_2>1$}. Taking in account \eqref{cond-Pols-P-3d}, $a_{12}=0$. So system
\eqref{sistema-discreto-3d-conjugado} is simplified. Thus, its fixed points
are the solutions to the algebraic system
$$
	u= P_1\left(v\right),  \quad
    v = P_2\left(w\right)+a_{10},  \quad
    w = A_3 + \frac{a_{20}}{d_2 p_{d_2}},
$$
which has the unique solution
$$
(u_0,v_0,w_0)=\left(P_1\left(v_0\right),P_2\left(w_0\right)+a_{10},A_3 + \frac{a_{20}}{d_2 p_{d_2}}\right).
$$
Moreover, if $(u,v,w)$ was part of a $2$-cycle, then 
$$	
u = P_1\big(P_2\left(w\right)+a_{10}\big),  \quad   
v = P_2\Big(A_3 + \frac{a_{20}}{d_2 p_{d_2}}\Big)+a_{10},  \quad    
{w} = A_3 + \frac{a_{20}}{d_2 p_{d_2}}.
$$
Hence $(u,v,w)=(u_0,v_0,w_0)$, the fixed point, so there are no $2$-cycles in this case. Even more, the third iteration $(u_3,v_3,w_3)$ of an arbitrary point $(u,v,w)$ is
$$
(u_{3},v_3,w_3)= 
\bigg(
P_1\Big(P_2\Big(A_3 +\frac{a_{20}}{d_2 p_{d_2}}\Big)+a_{10}\Big),
P_2\Big(A_3 +\frac{a_{20}}{d_2 p_{d_2}}\Big)+a_{10},  
A_3 +\frac{a_{20}}{d_2 p_{d_2}}
\bigg),
$$
which clearly does not depend on 
the coordinates of the initial point $(u,v,w)$.
The point $(u_3,v_3,w_3)$ is precisely 
the fixed point $(u_0,v_0,w_0)$. 
This last argument is the proof of Statement 1) because $a_{21}=0$ is equivalent to $\deg A_1(x)=1$.

{\it Case 2: $d_2=1$}. We assume  
$P_2(s)=p_{21}s+p_{20}$, 
and define $\alpha:=p_{20}+a_{10}$. Moreover, we can assume $\deg A_1(x)=2$ because otherwise we are in the previous paragraph. The polynomial map
\begin{equation*}
\begin{tikzcd}
(X,Y,Z) \arrow{r}[]{\Psi_2} 
& 
\displaystyle 
\bigg(
X+P_1(Y),\, 
Y, \,
\frac{Z+a_{12}P_1(Y)(P_1(Y)+2X)-\alpha}{p_{21}} 
\bigg)
=(u,v,w)
\end{tikzcd}
\end{equation*}
is a polynomial automorphism of $\R^3$ that gives
\begin{displaymath}
(\Psi_2^{-1}\circ G \circ \Psi_2)(X,Y,Z)=
\Big(P_1(Y)-P_1(Z),\, Z,\, a_{12}\big(P_1(Y)-P_1(Z)\big)^2+\nu\Big),
\end{displaymath}
where $\nu:=p_{21}A_3+a_{20}+\alpha$. Thus,
system \eqref{sistema-discreto-3d} is also conjugated to the system
\begin{equation}
\label{sistema-discreto-3d-conjugado-2}
	\begin{aligned}
	{X}_{k+1} &= P_1(Y_k)-P_1(Z_k),  \\[0.2pc]
    {Y}_{k+1} &= Z_k,  \\[0.0pc]
    {Z}_{k+1} &= a_{12}\big(P_1(Y_k)-P_1(Z_k)\big)^2+\nu.
    \end{aligned}
\end{equation}
From second equation in \eqref{sistema-discreto-3d-conjugado-2} it follows that this last discrete dynamical system has a unique fixed point at $(X_0,Y_0,Z_0)=(0,\nu,\nu)$. Moreover, if $(X,Y,Z)$ was part of a $2$-cycle, then 
$$
	{X} = P_1(Z)-P_1(Y),  \quad
    {Y} = a_{12}X^2+\nu,  \quad
    {Z} = a_{12}X^2+\nu.
$$
Hence $(X,Y,Z)=(0,\nu,\nu)$, so there are no $2$-cycles. This completes the proof of the general part of the theorem. To finish, we will prove Statement~2).

We claim that if the equation
\begin{equation}
\label{condicion-de-3-ciclo}
a_{12}\big(P_1(s)-P_1(\nu)\big)^2+\nu=s
\end{equation}
has a real solution $s_0\neq \nu$, then the point $(0,s_0,\nu)$ is part of a $3$-cycle of the discrete dynamical system
\eqref{sistema-discreto-3d-conjugado-2}. Indeed, if \eqref{condicion-de-3-ciclo} holds for $s_0\neq \nu$, then
$$
(0,s_0,\nu) \longrightarrow 
\big(P_1(s_0)-P_1(\nu),\nu,s_0)\big) \longrightarrow 
\big(P_1(\nu)-P_1(s_0),s_0,s_0)\big) \longrightarrow
(0,s_0,\nu),
$$
which is a $3$-cycle because no one of these points is the fixed point.
Look for a solution of equation \eqref{condicion-de-3-ciclo} is equivalent to look for a zero of the polynomial
\begin{equation}\label{h}
h(s)=a_{12}\big(P_1(s)-P_1(\nu)\big)^2+\nu-s.
\end{equation}
Since $h(s)$ has even degree and it has a simple zero at $s=\nu$ because  $h(\nu) = 0$ and $h'(\nu) = -1$,  it must have another real zero. Therefore, our claim follows. 

The linearization of 
\eqref{sistema-discreto-3d-conjugado-2}
at an arbitrary point $(X,Y,Z)$ is
the matrix
$$
\begin{pmatrix}
0 & P_1'(Y) & -P_1'(Z) \\ 
0 & 0 & 1 \\ 
0 & 2\,a_{12}(P_1(Y)-P_1(Z))P_1'(Y) & - 2\,a_{12}(P_1(Y)-P_1(Z))P_1'(Z)
\end{pmatrix}.
$$
By evaluating this matrix at each one of the three points of the $3$-cycle and computing their product we obtain the linearization of the third iteration of
\eqref{sistema-discreto-3d-conjugado-2} 
at the point $(0,s_0,\nu),$ which after using that $(P_1(s_0)-P_1(\nu))^2 = (s_0 - \nu)/a_{12}$ can be written as
$$
\begin{pmatrix}
0 & *  & *   \\ 
0 & L_{22} & *
\\ 
0 & 0 & 0
\end{pmatrix}
$$
where 
$L_{22} := 4 a_{12}^2 \big( P_{1}(s_{0}) - P_{1}(\nu)  \big)^2  (P_1^{'}(s_0))^2.$
Therefore, the $3$-cycle is an attractor when $|L_{22}|<1$ and it is a saddle when $|L_{22}|>1$. 
\end{proof}

\begin{remark}
When $P_1(s)$ is linear the equation \eqref{h} has only one solution $s_0$, different from $\nu$, and $L_{22} = 4.$ Hence, in this case the $3$-cycle of \eqref{sistema-discreto-3d-conjugado-2}  is not an attractor.
We can find suitable $a_{12}, \nu$ and $P_1(s)$ of degree two such that \eqref{sistema-discreto-3d-conjugado-2} has a $3$-cycle which is attractor.
\end{remark}

\begin{remark}
When $P_1(s)$ is linear, 
we compute, by using computational software, the Gr\"{o}ebner basis of the components of $(\Psi_2^{-1}\circ G \circ \Psi_2)^5(X,Y,Z)-(X,Y,Z)$. We obtain three polynomials, one of them depends only on $Z$ and has even degree with $\nu$ as a solution, the other two are linear in $X$ and $Y$. Hence,  \eqref{sistema-discreto-3d-conjugado-2} has a $5$-cycle. 
\end{remark}


\section{Continuous dynamics}
In this section, 
we will prove the general properties of 
the continuous dynamical system 
\eqref{sistema-diferencial-3d}.  
Before that, 
we need to recall 
some concepts related to the assertions of 
Theorem~\ref{Teorema-2} and 
Proposition~\ref{resultados-caso-d1=d2=1}.

A non-constant function 
$H \colon \R^3 \longrightarrow \R$ is a $C^r$ \emph{first integral} for differential system \eqref{sistema-diferencial-3d}
if the equation
\begin{equation*}
   \langle F, \nabla H\rangle =
   F_1H_x+F_2H_y+F_3H_z=0
\end{equation*}
holds on the whole $ \mathbb{R}^3$ and $H$ is of class $C^r$, with $r=1,2,\ldots,\infty,\omega$, where $C^{\omega}$ stands for analytic functions. In addition, if $H$ is a polynomial function, then we have a \emph{polynomial first integral}. Two $C^r$ functions $H_1(x,y,z)$  and $H_2(x,y,z)$ 
are \emph{functionally independent} in $\R^3$ if their gradients, $\nabla H_1$ and $\nabla H_2$, are linearly independent in a full Lebesgue measure subset of $\R^3$. Then, by definition, differential system  \eqref{sistema-diferencial-3d} is $C^r$(\emph{polynomially}) \emph{integrable} if it has a $C^r$ (polynomial) first integral in $\R^3$. Furthermore, it is $C^r$ (\emph{polynomially}) \emph{completely integrable} if it has two functionally independent $C^r$ (polynomial) first integrals in $\R^3$.

\begin{definition}
\label{iso}
A \emph{periodic surface of system \eqref{sistema-diferencial-3d}} 
is a surface $S\subset \R^3$, 
which is foliated by periodic orbits 
of the system. 
A periodic surface $S$ of  system \eqref{sistema-diferencial-3d}  is \emph{isochronous} when all its periodic orbits have the same period.
\end{definition}

\begin{proof}[Proof of Theorem 2]
From Section \ref{Seccion-2}, we know that differential systems \eqref{sistema-diferencial-3d} and \eqref{sistema-diferencial-transformado-3d} are polynomially conjugated through the change of coordinates \eqref{cambio-variables-3d-inversa}.
Moreover, the last two equations in \eqref{sistema-diferencial-transformado-3d} form a planar Hamiltonian system, whose Hamiltonian function is
$$
G(v,w):=\int P_2(w)\, dw - 
\frac{a_{12}}{d_{2} {p_d}_{2}}\int(P_1(v))^2\, dv
-A_3v.
$$
Then, by extending this function to $\R^3$, that is, 
 by defining the polynomial function 
\begin{equation}
\label{integral-primera-general}
H(u,v,w):=\int P_2(w)\, dw - 
\frac{a_{12}}{d_{2} {p_d}_{2}}\int(P_1(v))^2\, dv
-A_3v,
\end{equation}
we have
$$
H_{u}=0,\quad H_{v}=-\frac{a_{12}}{d_{2} {p_d}_{2}}(P_1(v))^2-A_3\quad \mbox{and}\quad H_{w}=P_2(w).
$$
Thus,
$$
P_1(v)\, H_{u}+P_2(w)\,H_{v}+\left( \frac{a_{12}}{d_{2} {p_d}_{2}}  \big(P_1(v)\big)^2+A_3\right) H_{w}= 0, \quad \forall \; (u,v,w)\in \R^3. 
$$
Hence, $H$ is a polynomial first integral of system \eqref{sistema-diferencial-transformado-3d}. Since the change of variables \eqref{cambio-variables-3d-inversa} is polynomial, also differential system \eqref{sistema-diferencial-3d} has polynomial first integral.

We now prove the second part of the theorem. Since $\deg A_1(x)=1$, $a_{12}=0$. Then, system \eqref{sistema-diferencial-transformado-3d} reduces to
\begin{equation}
\label{sistema-diferencial-transformado-3d-a12-cero}
\begin{aligned}
    \dot{u} &= P_1(v),\\[0.5pt]
    \dot{v} &= P_2(w),\\[0.5pt]
    \dot{w} &= A_3.
\end{aligned}
\end{equation}

We have proved that system \eqref{sistema-diferencial-transformado-3d} has a polynomial first integral, then we will show the existence of an additional polynomial first integral of the system.

If $A_3=0$, then \eqref{sistema-diferencial-transformado-3d-a12-cero} admits the two functionally independent polynomial first integrals
$$
H_1(u,v,w)=w 
\quad
\mbox{and}
\quad
H_2(u,v,w)=\int P_1(v)\,dv-uP_2(w).
$$
If $A_3 \neq 0$, then  \eqref{sistema-diferencial-transformado-3d-a12-cero} admits the two functionally independent polynomial first integrals
$$
H_1(u,v,w)=\int P_2(w)\,dw-A_3v
$$
and
$$
H_2(u,v,w)=A_3^{d_1+1}u-\sum_{j=0}^{d_1}(-1)^jA_3^{d_1-j}\left(\frac{d^j}{dv^j} P_1(v)\right)\,\xi_j(w),
$$
where $\xi_0(w)=w$ and $\xi_j(w)=\int P_2(w)\xi_{j-1}(w)\,dw$ for $j=1,2,\ldots,d_1$. In both previous cases $H_1(u,v,w)$ is the reduction of the polynomial first integral \eqref{integral-primera-general}.
Thus, system \eqref{sistema-diferencial-transformado-3d-a12-cero} is polynomially completely integrable. Therefore,  system \eqref{sistema-diferencial-3d}, with $\deg A_1(x)=1$,  is also polynomially completely integrable because it and system \eqref{sistema-diferencial-transformado-3d-a12-cero} are equivalent after a polynomial change of coordinates.
\end{proof}


\subsection{Case $d_1=d_2=1$} 

\begin{proof}[Proof of Proposition~\ref{resultados-caso-d1=d2=1}]
Recall that differential systems \eqref{sistema-diferencial-3d}  and \eqref{sistema-diferencial-transformado-3d} are equivalent under the polynomial change of coordinates
\eqref{cambio-variables-3d-inversa}. 

\smallskip
Statement $1)$. Since $\deg A_1(x)=1$, $a_{12}=0$. The linear change of coordinates
$$
X=\frac{1}{p_{d_1}p_{d_2}}\,u,\; 
Y=\frac{1}{p_{d_1}p_{d_2}}\,P_1(v),\; 
Z=\frac{1}{p_{d_2}}\,P_2(w)
$$
transforms the differential system \eqref{sistema-diferencial-transformado-3d}, with $a_{12}=0$, into the differential system
\begin{equation*}
\begin{aligned}
    \dot{X} &= Y,\\[0.5pt]
    \dot{Y} &= Z,\\[0.5pt]
    \dot{Z} &= A_3,
\end{aligned}
\end{equation*}
which can be solved explicitly. Indeed, the trajectory  $\phi_t(X_0,Y_0,Z_0)$ of the system passing through the point $(X_0,Y_0,Z_0)$ has the components:
$$
X(t)=\dfrac{A_{{3}}}{6}\,{t}^{3}+\frac{Z_{{0}}}{2}\,{t}^{2}+Y_{{0}}\,t+X_{{0}},\quad
Y(t)=\dfrac{A_{{3}}}{2}\,{t}^{2}+Z_{{0}}\,t+Y_{{0}},\quad
Z(t)=A_{{3}}\,{t}+Z_{{0}}.
$$
So if $(X_0,Y_0,Z_0)$ is not a singularity, then the nontrivial trajectory $\phi_t(X_0,Y_0,Z_0)$ escapes to infinity in forward and backward  time. 

\smallskip
Statement $2)$. Since $\deg A_1(x)=2$, $a_{12} \neq 0$. The linear change of coordinates
$$
X=(a_{12}\,p_{d_1})\,u,\; 
Y=(a_{12}\,p_{d_1})\,P_1(v),\; 
Z=(a_{12}\,p_{d_1}^2)\,P_2(w)
$$
transforms the differential system \eqref{sistema-diferencial-transformado-3d}, with $a_{12} \neq 0$, into the differential system
\begin{equation}
\label{sistema-diferencial-transformado-3d-lineal-a12-nocero}
\begin{aligned}
    \dot{X} &= Y,\\[0.5pt]
    \dot{Y} &= Z,\\[0.5pt]
    \dot{Z} &= Y^2+\mu,
\end{aligned}
\end{equation}
where $\mu=A_3\,a_{12}\,p_{d_2}\,p_{d_1}^2$ (as it is defined in Proposition \ref{resultados-caso-d1=d2=1}). Moreover, the first integral \eqref{integral-primera-general} for system \eqref{sistema-diferencial-transformado-3d}  becomes
$$
H(X,Y,Z)=-\mu\,Y+\frac{Z^2}{2}-\frac{Y^3}{3},
$$
which is a first integral for system \eqref{sistema-diferencial-transformado-3d-lineal-a12-nocero}.
Thus, a trajectory of the system \eqref{sistema-diferencial-transformado-3d-lineal-a12-nocero} is contained in a level surface $H^{-1}(c)\subset \R^3$ of $H$, with $c \in \R$. Since $H$ does not depend on $X$,  $H^{-1}(c)$ has the form
$$
H^{-1}(c)=\R \times G^{-1}(c),
$$ 
where $G(Y, Z)=-\mu\,Y+Z^2/2-Y^3/3$. Moreover,
the last two equations in \eqref{sistema-diferencial-transformado-3d-lineal-a12-nocero} form the planar Hamiltonian system associated with $G(Y, Z)$. 
We will give the proof of $a)$, $b)$ and $c)$ of the statement in three cases: $\mu>0$, $\mu=0$ and $\mu<0$.

\smallskip
{\it Case 1: $\mu>0$}. $G(Y, Z)$ does not have any singular point in the $YZ$-plane. Thus, $G^{-1}(c)$ is homeomorphic to $\R$ for any $c\in \R$. In addition, system \eqref{sistema-diferencial-transformado-3d-lineal-a12-nocero} does not have singularities in the whole space $\R^3$, then each
$H^{-1}(c)$ is a simply connected surface without any singularity of the system. Therefore, each trajectory goes to infinity in forward and backward time.

\smallskip
{\it Case 2: $\mu=0$}. $G(Y, Z)$ has the origin as the unique singularity in the $YZ$-plane. In fact, $(0,0)$ is a cusp singularity of $G(Y, Z)$. Since $G(0,0)=0$, $G^{-1}(0)$ is the cuspidal cubic curve. Hence, $G^{-1}(c)$ is homeomorphic to $\R$ for any $c\neq 0$. In addition, since all the singularities of \eqref{sistema-diferencial-transformado-3d-lineal-a12-nocero} are of the form $(X,0,0)$, they are contained in the cuspidal invariant (singular) surface $S_{0}:=H^{-1}(0)=\R \times G^{-1}(0)$. This implies that $H^{-1}(c)$, with $c \neq 0$ is a simply connected surface without any singularity of the system. Hence, all trajectories in $\R^3 \backslash S_{0}$ have to escape to infinity in  forward and backward time. 

\smallskip
{\it Case 3: $\mu<0$}. We can change the parameter $\mu$ by $-\beta^2$, with $\beta>0$. Then, by using the linear the change of coordinates
$
{x}=\sqrt{\beta}X,\;
{y}=\beta(Y-1),\;
{z}={\beta}^{3/2}\,Z
$
and the linear change of time $\tau=\sqrt{\beta}\, t$, the differential system \eqref{sistema-diferencial-transformado-3d-lineal-a12-nocero}, with $\mu=-\beta^2$, is transformed into the  differential system 
\begin{equation}
\label{sistema-diferencial-transformado-3d-lineal-a12-nocero-redu3}
\begin{aligned}
    x' &= y-1,\\[0.5pt]
    y' &= z,\\[0.5pt]
    z' &= y(y-2),
\end{aligned}
\end{equation}
where the prime denotes the derivative with respect to a new time variable $\tau$. Thus, for completing the proof of this case,
we will demonstrate that system \eqref{sistema-diferencial-transformado-3d-lineal-a12-nocero-redu3}
has a unique isochronous periodic surface
$\mathcal{S}^{*}$ and that all its trajectories 
in $\R^3 \backslash \mathcal{S}^{*}$ go to infinity
in forward and backward time.

The differential system
\eqref{sistema-diferencial-transformado-3d-lineal-a12-nocero-redu3}
does not have any singularity in the whole $\R^3$ and it
has the polynomial first integral
$$
H(x,y,z)=(6y^2+3z^2-2y^3)/6.
$$ 
Since this first integral  does not depend on $x$, 
$
H^{-1}(c)=\R \times G^{-1}(c),
$
where $G(y,z)=(6y^2+3z^2-2y^3)/6$. 
The last two equations in 
\eqref{sistema-diferencial-transformado-3d-lineal-a12-nocero-redu3} 
form, in the $yz$-plane, the 
planar Hamiltonian system associated with $G(y,z)$, 
whose singularities are $(0,0)$ and $(2,0)$. 
A simple computation shows 
that they are a center and a saddle, respectively.  
Thus, this Hamiltonian system  has a period 
annulus $\mathscr{P}$ surrounding the center $(0,0)$ and  
bounded by the homoclinic loop $\Gamma$
that joins the stable and the unstable 
manifolds of the saddle point $(2,0)$.
Since $G(0,0)=0$ and $G(2,0)=4/3$,
for all $c \in (0,4/3)$ the level curve $G^{-1}(c)$ 
has a connected component $\gamma_c$ homeomorphic to the unit 
circle $\Ss^1$ that forms part of $\mathscr{P}$ and 
the level surface $H^{-1}(c)$ has a connected component $
\mathcal{S}_c$ homeomorphic to the cylinder $\R \times 
\Ss^1$. See  Figure \ref{Fig1}. 

The straight lines 
$L_{0}:=\R \times \{(0,0)\}$
and
$L_{2}:=\R \times \{(2,0)\}$
are invariant by the flow of 
\eqref{sistema-diferencial-transformado-3d-lineal-a12-nocero-redu3}. 
Thus, as trajectories, 
they go to infinity in forward and backward time. 
Moreover, a straightforward analysis
on the topology of $G^{-1}(c)$ 
implies that for any $c\in \R$,
$$
H^{-1}(c)\cap \left(\R^3 \backslash \big(\cup_{c \in (0,4/3)} \mathcal{S}_c \cup L_0\cup L_2\big)\right)
$$
is formed only by disjoint simply connected surfaces. Hence,
 $i)$ only the invariant surfaces $\mathcal{S}_c$, 
with $c\in(0,4/3)$,
could support periodic orbits and $ii)$
any trajectory of system \eqref{sistema-diferencial-transformado-3d-lineal-a12-nocero-redu3} in  $\displaystyle \R^3 \backslash \cup_{c \in (0,4/3)} \mathcal{S}_c$ goes to infinity in forward and backward time. It remains to prove the existence of only one surface $S^{*}=\mathcal{S}_{c^{*}}$, with $c^{*} \in (0,4/3)$, that is foliated by periodic orbits of the same period.
\begin{center}
\begin{figure}[ht]
\begin{tabular}{ccc}
\includegraphics[scale=0.3]{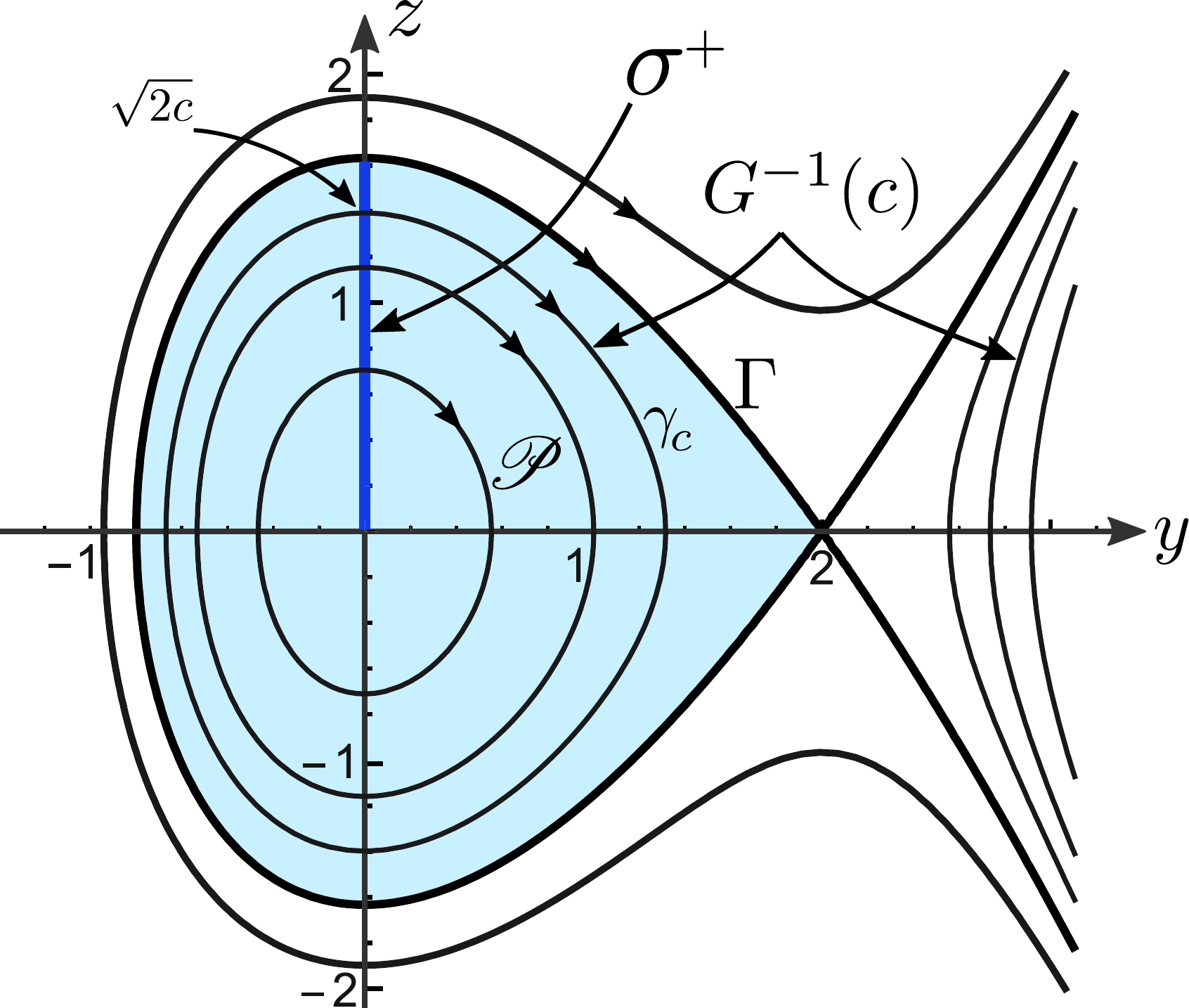}
& \hspace{0.4cm} &
\includegraphics[scale=0.32]{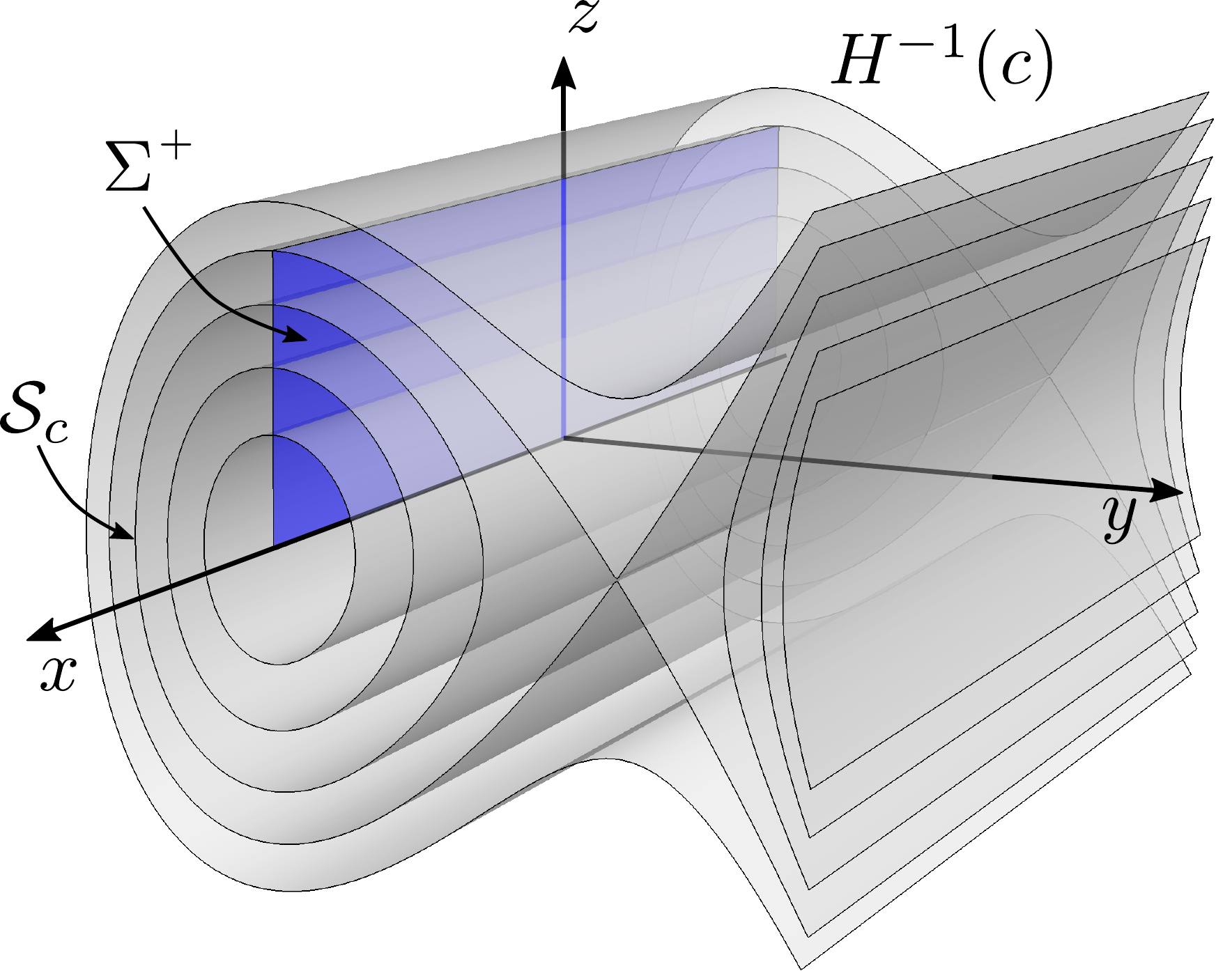}\\
$a)$ & \hspace{0.8cm} & $b)$
\end{tabular}
\caption{\label{Fig1} $a)$ Phase portrait of 
the planar Hamiltonian system associated with 
$G(y, z)$.  $b)$ Foliation of the first integral 
of \eqref{sistema-diferencial-transformado-3d-lineal-a12-nocero-redu3}. }
\end{figure}
\end{center}
In the $yz-$plane, the intersection of the period annulus $\mathscr{P}$ with the positive $z$-axis is the line segment 
$\sigma^{+}:=\{(0,z) \; | \; 0<z<\sqrt{8/3}\}$,
which is a transversal section for the flow of the
Hamiltonian system associated with $G(y, z)$.
The dot product of the vector $(0,1,0)$, which is orthogonal to $\Sigma^{+}:=\R \times \sigma^{+}$, and the vector field $\mathcal{X}$, associated to  \eqref{sistema-diferencial-transformado-3d-lineal-a12-nocero-redu3}, has defined sing: $\langle (0,1,0),\mathcal{X}\rangle=z>0$. Hence, $\Sigma^{+}$ is a 
$2$-dimensional transversal section for the flow of system~\eqref{sistema-diferencial-transformado-3d-lineal-a12-nocero-redu3}. 

As usual, we can use the energy level $c$ of $H$ to get the parametrization
$$
\mathbb{R} \times (0, 4/3) \longrightarrow \Sigma^+, \,  (x,c) \longmapsto (x,0, \sqrt{2c}),
$$
of the transversal section $\Sigma^{+}.$ In other words, the points in $\Sigma^{+}$ can be described by the two coordinates $(x,c)$.
Let $\phi_{\tau}(x,c)$ be the trajectory of system \eqref{sistema-diferencial-transformado-3d-lineal-a12-nocero-redu3} passing through $(x,c)\in \Sigma^{+}$. Since the right-hand side of the system does not depend of $x$, $\phi_{\tau}(x,c)$  has the form
$
\big(x_{c}(\tau),\varphi_{\tau}(0,c)\big),
$
where $\varphi_{\tau}(0,c)=(y_{c}(\tau),z_{c}(\tau))$  is the trajectory of the Hamiltonian system associated with $G(y,z)$, passing through the point $(0,c)\in \sigma^{+}$ at time $\tau=0$. Thus, there exists a 
well-defined Poincar\'e first return map 
$$
\begin{array}{rcl}
\PP \colon \Sigma^{+} & \longrightarrow  & \Sigma^{+}\\
(x,c) & \longmapsto &\phi_{\tau(x,c)}(x,c),
\end{array}
$$ 
where $\tau(x,c)$ is the time of first return 
of the point $(x,c)$ to $\Sigma^{+}$.

Each trajectory $\phi_{\tau}(x,c)$ 
of the system starting in the region 
$\R \times \mathscr{P} \subset \mathbb{R}^3$ is contained 
in the  surface $\mathcal{S}_c$ and 
$\Sigma^{+}\cap \mathcal{S}_c=
\{(x,c) \;|\; x \in \mathbb{R}\}$, then 
the $c$-coordinate of $\PP(x,c)$ remains 
invariant. Thus, 
$\PP(x,c)=\phi_{\tau(x,c)}(x,c)=(x_c(\tau(x,c)),c)$, 
which implies that 
the fixed points of $\PP$ are in correspondence 
with the zeros of the displacement function
$$
L(x,c):=x_c(\tau(x,c))-x_c(0).
$$

Since the right-hand side of the system (\ref{sistema-diferencial-transformado-3d-lineal-a12-nocero-redu3}) 
does not depend on $x$, the time of first 
return $\tau(x,c)$ does not either, that is, 
$\tau(x,c)=\tau(0,c)$. Thus, if $L(0,c^{*})=0$, 
then $L(x,c^*)=0$ for all $x\in \R$, whence  $\mathcal{S}_{c^{*}}$ will be a isochronous (periodic) surface, 
according to Definition \ref{iso}. 
Hence, it is enough to study the function 
$$
L(0,c)=x_c(\tau(0,c))-x_c(0), \quad \mbox{with } x_c(0)=0.
$$
To complete the proof, we will prove that
$L(0,c)<0$ for $0<c \leq 2/3$,
$L(0,c)>0$ for $2/3 \ll c <4/3$,
and $L(0,c)$ is a monotonous increasing function in $\big(2/3,4/3\big)$,
which implies the existence of a unique 
$c^{*}\in (0, 4/3)$ such that 
$L(0,c^{*})=0$. This will prove the uniqueness of the isochronous surface $\mathcal{S}_{c^{*}}$ . The proof of these assertions is analogous to the proof of the uniqueness of the limit cycle in the van der Pol differential system given in \cite[Sec 12.3]{HSD}. Hence, we will give the main ideas to prove the properties of $L(0,c)$ and we leave the details to the reader.

From the fundamental theorem of calculus and the first equation in \eqref{sistema-diferencial-transformado-3d-lineal-a12-nocero-redu3} we have
$$
L(0,c)=x_c(\tau(0,c))-x_c(0)=\int_{0}^{\tau(0,c)}x_c'(\tau)\,d \tau=
\int_{0}^{\tau(0,c)}(y_c(\tau)-1)\,d \tau.
$$
In Figure \ref{Fig2} we show an sketch for the graph 
of $y_c(\tau)-1$, whose shape follows easily from $a)$ in Figure 
\ref{Fig1} and the fact that $G(1,0) = 2/3$. Hence, for $0<c \leq 2/3$, $y_c(\tau)-1<0$
for almost all $\tau$, thus
$
L(0,c)<0
$.
\begin{center}
\begin{figure}[h]
\begin{tabular}{ccc}
\includegraphics[scale=0.31]{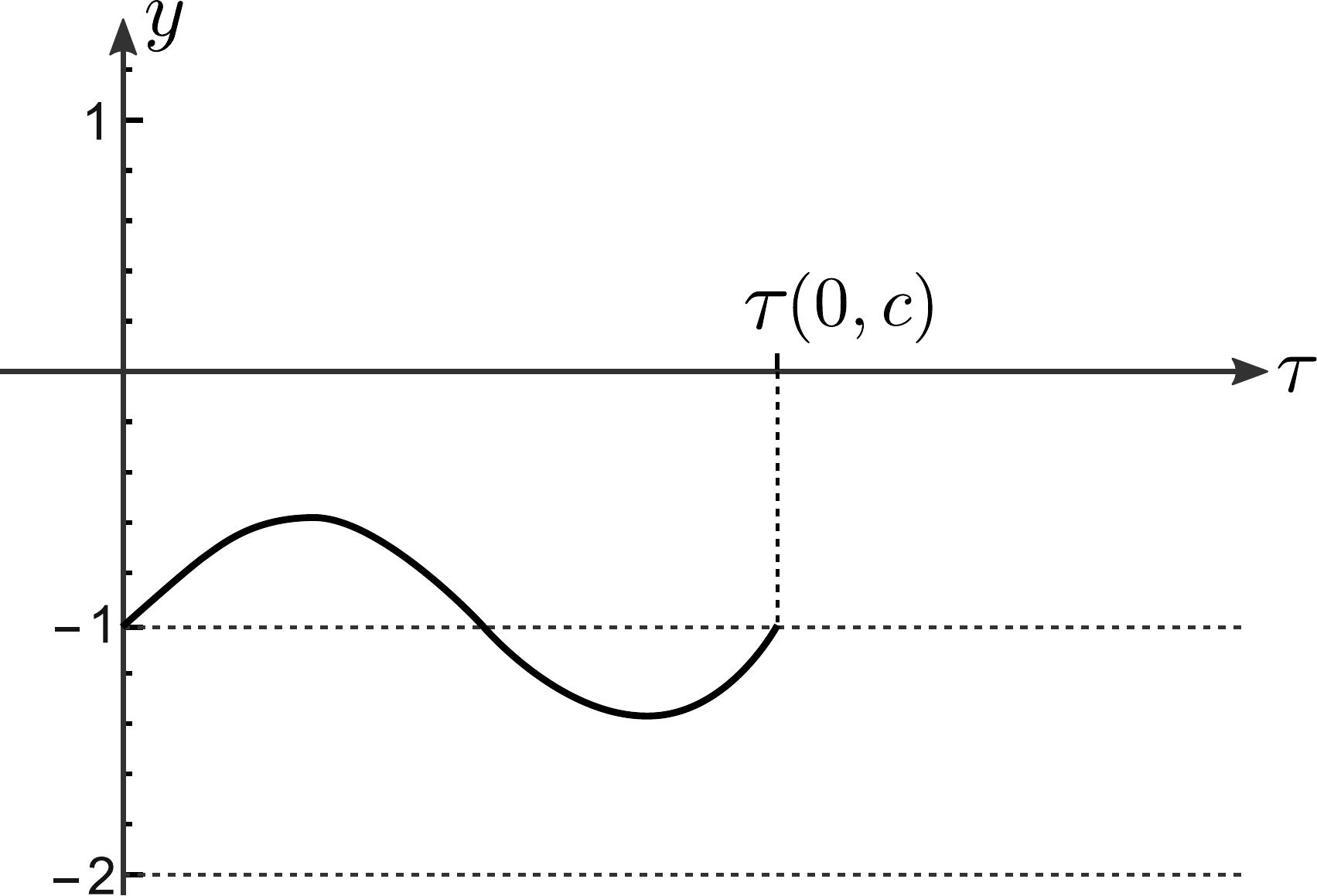}
& \hspace{0.0cm} &
\includegraphics[scale=0.31]{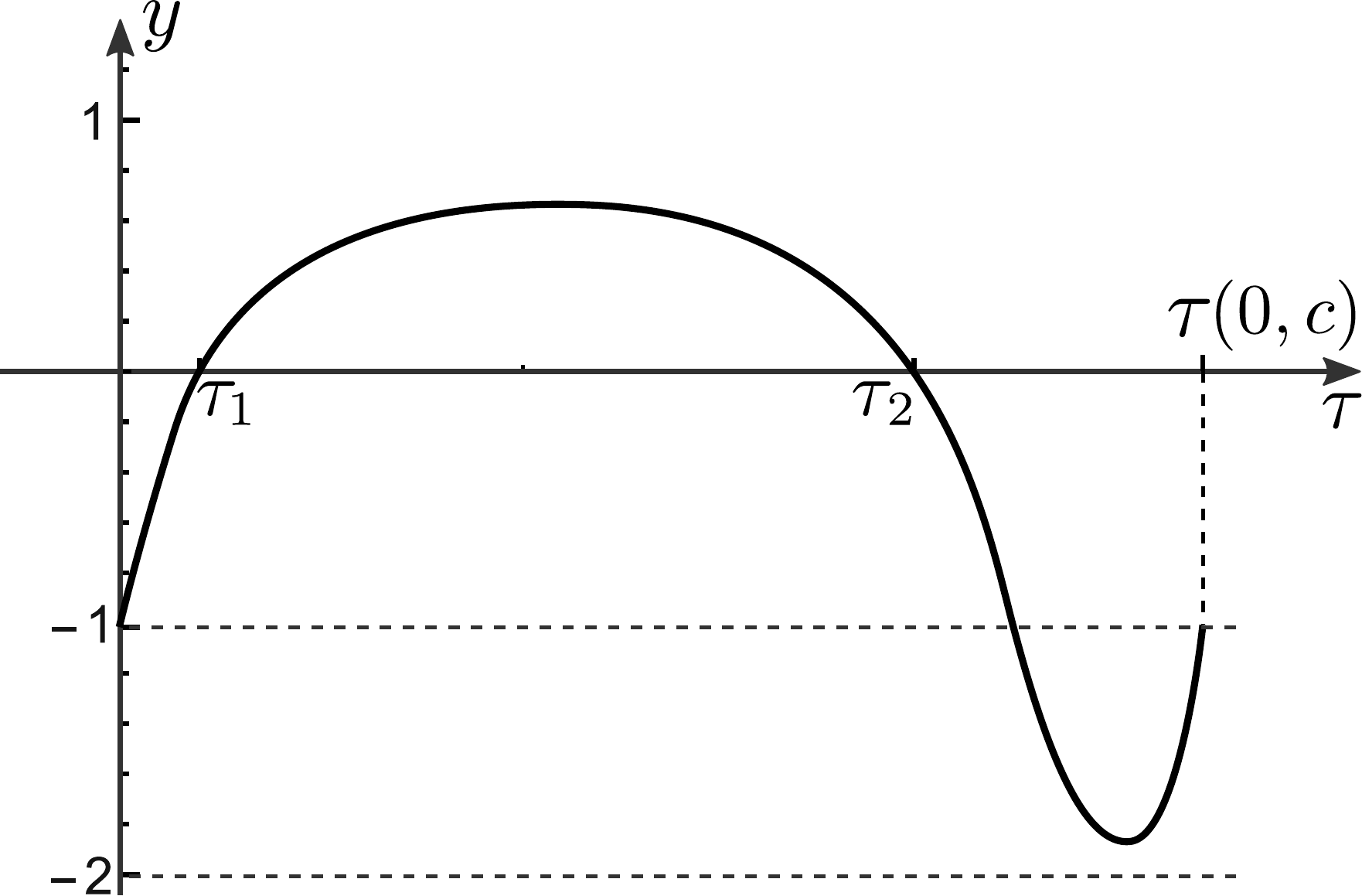} \\
$a)$ & \hspace{0.6cm} & $b)$
\end{tabular}
\caption{\label{Fig2} Graph of $y_c(\tau)-1$ for initial condition $(0,c)\in \Sigma^{+}$, with $c \in (0, 2/3)$ in $a)$ and with $c$ close to $4/3$ in $b)$. The $\tau_1$ and $\tau_2$ are the positive times that $\varphi_{\tau}(0,c)$ needs to reach the vertical line $\{y=1\}$ by first and second time, respectively.}
\end{figure}
\end{center}
For $2/3 < c < 4/3,$ we rewrite the displacement function as 
$$L(0,c) = \displaystyle \int_{\gamma_c} y-1.$$
Following \cite[p.269]{HSD}, we divide the curve $\gamma_c$ into four curves $\gamma^1_c, \gamma^2_c, \gamma^3_c, \gamma^4_c$ as shown in Figure~\ref{Fig3}. Let $y_0:= 1- \sqrt{3}$ be the intersection of $\gamma_{2/3}$ with the negative $y-$axis. 
\begin{center}
\begin{figure}[h]
\begin{tabular}{ccc}
\includegraphics[scale=0.34]{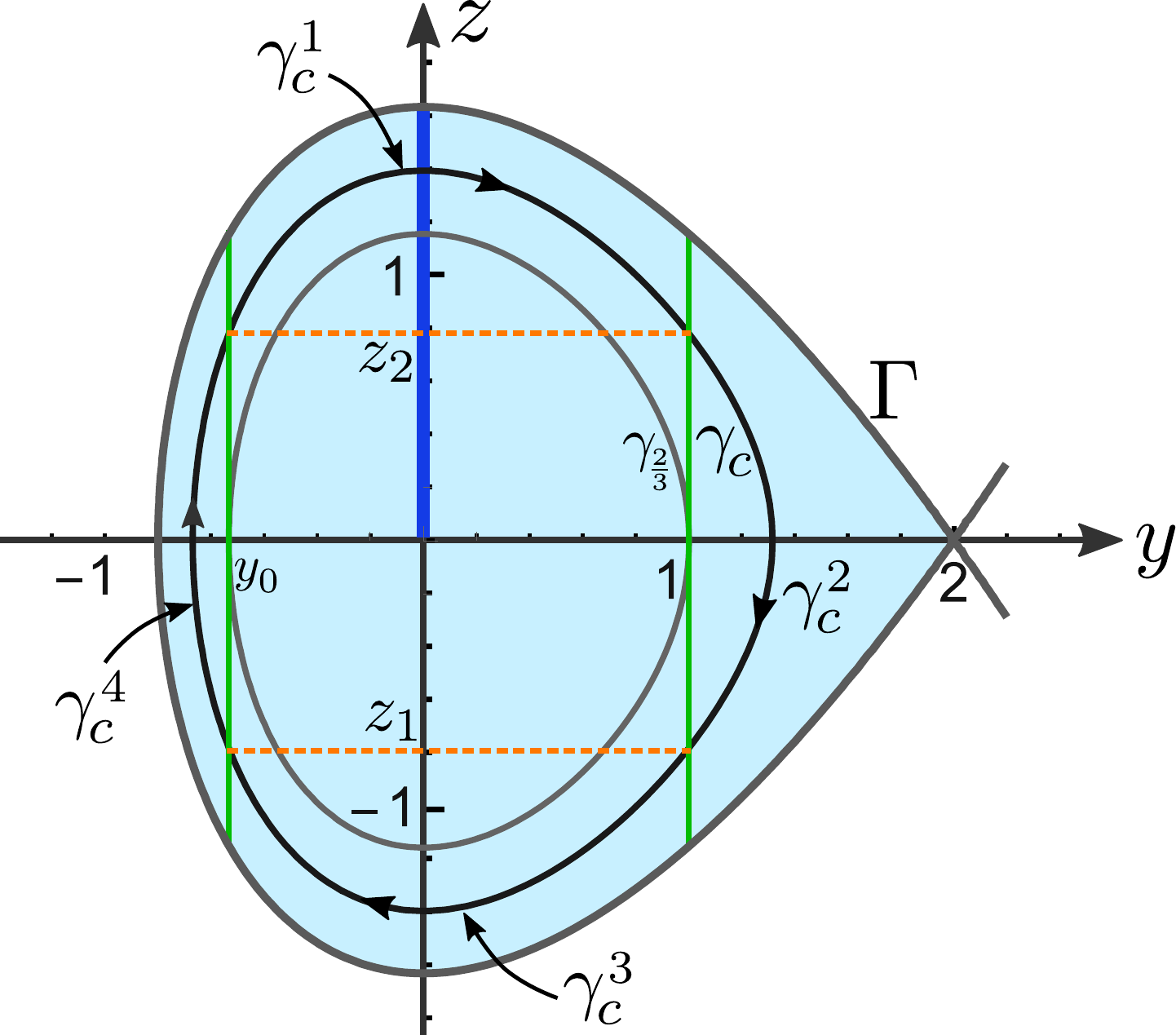}
& \hspace{0.0cm} &
\includegraphics[scale=0.33]{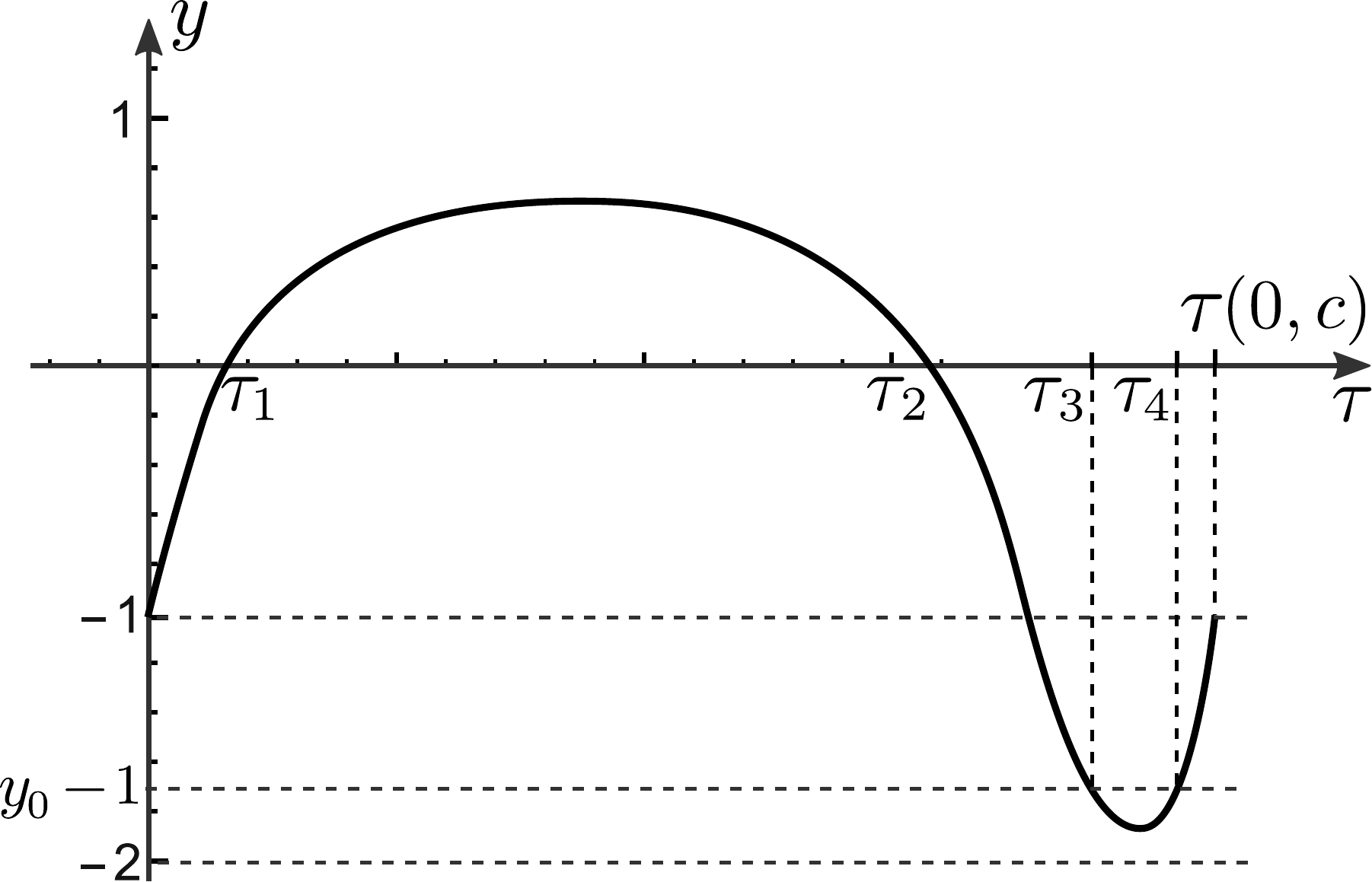} \\
$a)$ & \hspace{0.6cm} & $b)$
\end{tabular}
\caption{\label{Fig3}  The curve $\gamma_c$ divided in the curves $\gamma_c^1$, $\gamma_c^2$, $\gamma_c^3$ and $\gamma_c^4$, with $2/3<c<4/3$, is showed in $a)$. The corresponding graph of $y_c(\tau)-1$  is showed in $b)$.}
\end{figure}
\end{center}
The curves $\gamma^1_c$ and $\gamma^3_c$ are defined for $y_0 \leq y \leq 1,$ while the curves $\gamma^2_c$ and $\gamma^2_c$ are defined for $z_1 \leq z \leq z_2$ (of course, $z_1$ and $z_2$ depend on $c$). Then
$$L(0,c) = L_1(c)+L_2(c)+L_3(c)+L_4(c),$$
where
$$L_i(c):= \displaystyle \int_{\gamma^i_{c}} y-1, \,\, i=1,2,3,4.$$

We can make an analogous analysis  as in \cite[p.270]{HSD} to prove the following properties. $L_1(c)$ and $L_3(c)$ are negative monotonous increasing functions which are bounded in the interval $(2/3, 4/3)$; $L_2(c)$ is a positive monotonous increasing function, which goes to infinity as $c$ goes to $4/3$; $L_4(c)$ is a negative decreasing function, which is bounded and whose derivative goes to zero as $c$ goes to $4/3.$ The properties on $L_2(c)$ and $L_4(c)$ imply that $L_2(c) + L_4(c)$ has a unique zero and $L_2(c) + L_4(c)$ goes to infinity as $c$ goes to $4/3.$ In addition, the properties on $L_1(c)$ and $L_3(c)$ imply that $L(0,c)$ has a unique zero $c^*$ in $(2/3, 4/3).$ Furthermore, a more accurate analysis proves that $L(0,c)$ is a monotonous increasing function in $(2/3, 4/3).$
\end{proof}

Figure \ref{Fig4} shows part of the phase portrait of \eqref{sistema-diferencial-transformado-3d-lineal-a12-nocero-redu3} close to the isochronous surface $\mathcal{S}_{c^{*}}$, where $c^{*}\in (1.6305,1.6310)$.  More precisely, it gives part of the level surfaces of the first integral and the trajectories with initial conditions $(0,0,1.6323)$, in magenta,  $(0,0,1.54919)$, in red, and $(0,0,1.6308)$, $(2.7,0,1.6308)$, $(-4,0,1.6308)$ in blue. The magenta trajectory advances in the positive direction of the $x$-axis, the red trajectory advances in the negative direction of the $x$-axis and the blue ones are (approximately) periodic, with the same period.
\begin{center}
\begin{figure}[ht]
\includegraphics[scale=0.8]{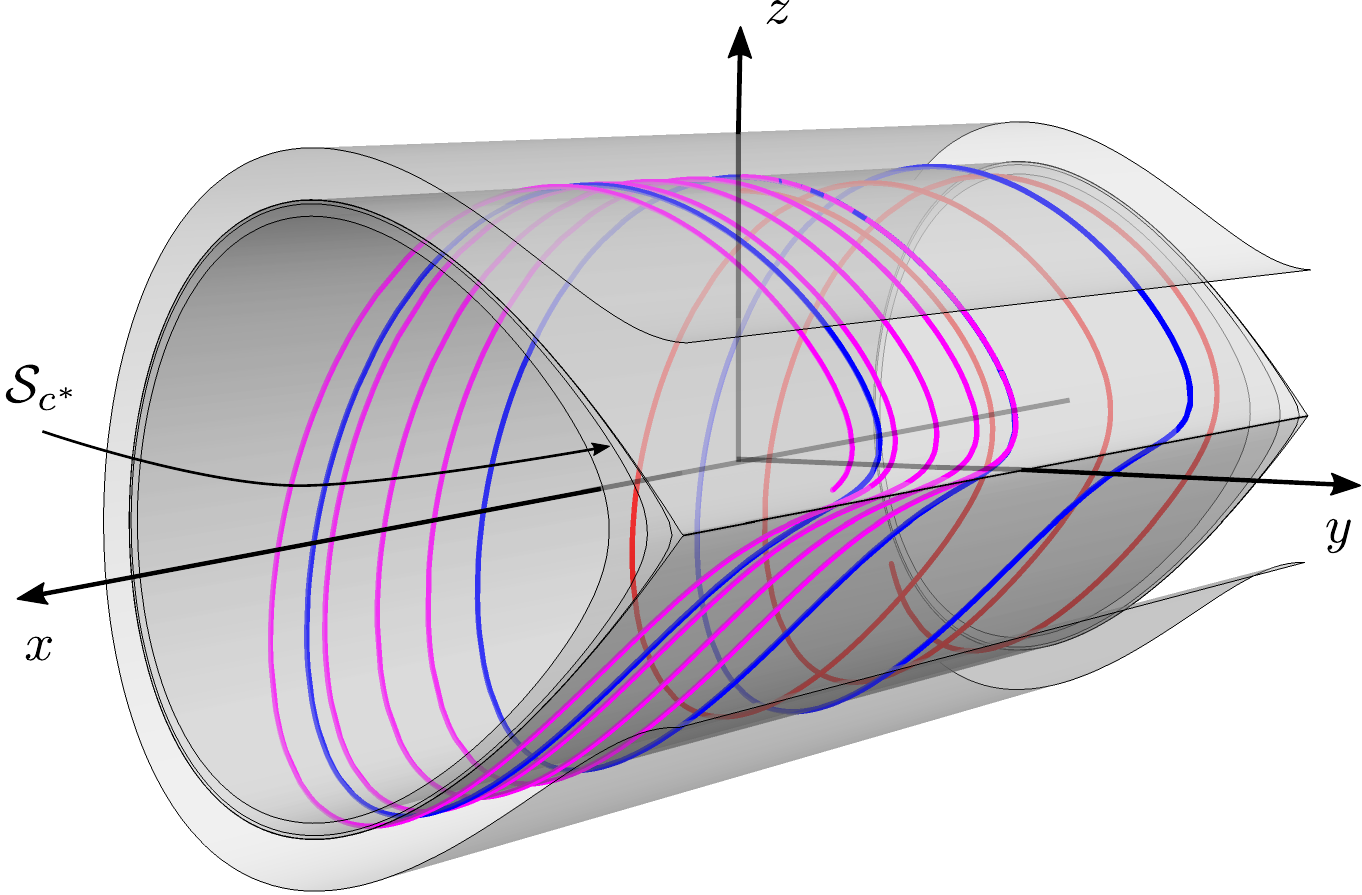} 
\caption{\label{Fig4} Existence of a isochronous surface $\mathcal{S}_{c^{*}}$ of system  \eqref{sistema-diferencial-transformado-3d-lineal-a12-nocero-redu3}.} 
\end{figure}
\end{center}

\section{Concluding remarks}
From the previous sections
we can observe that 
the discrete and 
continuous dynamics 
of the nilpotent polynomial vector fields
\eqref{campo-vectorial-3d-corto}
share some similarities. 
For instance,
from \eqref{sistema-discreto-3d-conjugado} 
it follows that 
the surface
$$
w=\frac{a_{12}}{d_2 p_{d_2}} u^2 + A_3 + \frac{a_{20}}{d_2 p_{d_2}}
$$ 
is reached  
from any initial point
after one iteration. Thus, 
this surface contains the long-term dynamics
of the system $(\R^3,\N_0,G)$, 
which is conjugated to \eqref{sistema-discreto-3d}.
Similarly, 
since system \eqref{sistema-diferencial-3d}
is polynomially integrable, its dynamics
evolves in the algebraic level surfaces of the 
polynomial first integral. 
Hence, this reduction 
of the dynamics 
in one dimension 
is a similarity that, in general, share 
the discrete and continuous dynamical systems previously mentioned.
Other similarity is that 
for the case $\deg A_1(x)=1$,
the discrete dynamical system 
\eqref{sistema-discreto-3d} 
and 
the continuous dynamical system 
\eqref{sistema-diferencial-3d} 
are completely understood.  
Indeed,
from Theorem \ref{Teorema-1}
the discrete system 
has a global attractor 
and 
from Theorem \ref{Teorema-2} 
the continuous system 
is polynomially completely integrable.

From conditions 
\eqref{cond-Pols-P-3d} 
we know that 
$\deg A_1(x)=1$ 
if $d_2 > 1$.
Hence, 
according to the previous paragraph,
the discrete and 
continuous dynamics 
of 
\eqref{campo-vectorial-3d-corto} will be more interesting for 
$d_1\geq 1$, $d_2=1$ and $\deg A_1(x)=2$. 

The system \eqref{sistema-discreto-3d} with $d_2=1$  has been studied in Statement 2) of Theorem~\ref{Teorema-1}. We have showed that in such case there exist a unique fixed point, there are not $2$-cycles and there exists at least one $3$-cycle.
This triggers the following questions:
\begin{itemize}
\item How to discern analytically the existence of $m$-cycles with  $ m \geq 4$?
\item Could be the $3$-cycle of Theorem \ref{Teorema-1} unique and attractor?
\end{itemize}     

The system \eqref{sistema-diferencial-3d}, 
with  $d_1=d_2=1$ have been analyzed in
Proposition \ref{resultados-caso-d1=d2=1}. We showed 
that when
the associated planar Hamiltonian system 
has only one period annulus, the system has
only one isochronous periodic  
surface $\mathcal{S}_{\mu}$. 
Concerning this result 
a natural question arise: 
\begin{itemize}
\item Is any periodic orbit in $\mathcal{S}_{\mu}$ persisting under
the perturbation $\lambda I$ with $\lambda < 0$?
\end{itemize}
  
A positive answer to the this question would 
give a affirmative response to the Problem 19 in \cite{GasullProblemas}, which is related with the Markus--Yamabe Conjecture.

We note that
for $d_1>1$ the 
planar Hamiltonian system associated with 
system \eqref{sistema-diferencial-3d} 
can have several period annuli. 
For instance, 
by taking 
$P_1(s)=s^2-s-3$, $P_2(s)=s$, $a_{12}=1$ and $A_3=-6$,
the system \eqref{sistema-diferencial-3d} has two period annuli. 
Hence, we can ask:
\begin{itemize}
\item How many 
periodic surfaces can have 
system \eqref{sistema-diferencial-3d} for $d_1\geq 1$ and $d_2=1$? 
\end{itemize}
This question is in some sense analogous to the problem about the number of limit cycles in planar polynomial vector fields. See \cite{Ily02,RP2009} and references there in.

In this work, we have focused in the discrete and continuous dynamics of the nilpotent polynomial vector fields in dimension three. However, we believe that the techniques used in the present research are useful also for an analogous study in higher dimensions. Recall that in \cite{CaVa} is provided the characterization of a wide class of nilpotent polynomial vector fields in any dimension. Of course, the generalization or extension of the results presented here is no simple. For example, in dimension four there are six different families of  nilpotent polynomial vector fields to be analyzed. We expect that in some of these families could be arise different behaviors than those obtained here.










\end{document}